\theoremstyle{plain}
\newtheorem{theorem}{\textbf{Theorem}}[section]
\newtheorem{lemma}[theorem]{\textbf{Lemma}}
\newtheorem{remark}[theorem]{\textbf{Remark}}
\newtheorem{definition}[theorem]{Definition}
\newtheorem{example}[theorem]{\textbf{Example}}
\numberwithin{equation}{section}
\begin{document}

\title[ A new contraction principle on the perimeters of triangles and related results ]
{  A new contraction principle on the perimeters of triangles and related results}
\author[ Tanusri Senapati ]%
{ Tanusri Senapati  }


\address{ Tanusri Senapati,
                    Department of Mathematics,
                    Guskara Mahavidyalaya,
                    West Bengal,
                    India.}
                    \email{tanusri@gushkaramahavidyalaya.ac.in}


 \subjclass[2010]{  Primary 47H10; Secondary 47H09}
 \keywords {metric space, fixed point theorem, contraction mapping,  perimeter of triangle}

\begin{abstract}
In this article, we introduce a new type of mapping contracting perimeters of triangles in a complete metric space and present related fixed point theorem. We study the metric completeness property of the underlying space in terms of fixed point of our newly introduced mapping. In support of our result, we present several examples.
\end{abstract}

\maketitle

\section{\bf Introduction}
\baselineskip .55 cm
Very recently, E. Petrov \cite{pet} presented fixed point theorems for mapping contracting perimeters of triangles in complete metric spaces. To study this type of mappings contracting perimeter of triangles we need at least three distinct points of the underlying spaces. Also, the author of \cite{pet} proved that, like Banach contraction \cite{ban}, the mapping contracting perimeter of  triangles in a metric space is continuous at every point in the underlying metric space. Inspiring by the work of Petrov \cite{pet} and $\acute{C}$iri$\acute{c}$ \cite{cir1,cir2}, in this article, we introduce a new type of mapping contracting perimeters of triangles in a metric space. We observe that our mapping contracting perimeters of triangles is not necessarily continuous at every point of the underlying space. Before we move into details, we first state some basic results related to this work.

\begin{definition}
	Let ($\mathbb{X},d)$ be a metric space with at least three points and let $T:\mathbb{X } \to \mathbb{X}$ be a mapping. Suppose for every three distinct $x,y,z\in \mathbb{X}$, $T$ satisfies the following condition:
	$$d(Tx,Ty) +d(Ty,Tz) +d(Tz,Tx) \leq k M\big (d(x',y')+d(y', z') +d(z', x')\big )$$ where $k\in (0,1)$ and $M\big(d(x', y') +d(y', z') +d(z', x') \big)$ is defined as
	$$\max\big\{d(x', y') +d(y', z') +d(z', x'): x',y',z'\in \{x,y,z,Tx,Ty,Tz\}~
	\mbox{and}~ x'\neq y'\neq z'\big\}.$$
	We call this newly introduced map as a weak contraction map on the perimeters of triangles in $\mathbb{X}$.
\end{definition}
Throughout this article, we always consider $(\mathbb{X},d)$ is a metric space with $|\mathbb{X}|\geq 3$. For any self mapping $T$ on $\mathbb{X}$, the orbit of $T$ at any point $x\in \mathbb{X}$ is  denoted by  $O(x,\infty)=\{T^ix:i\in  \mathbb{N}\cup \{0\}\}$. Also for any $n\in \mathbb{N}$, $O(x,n)=\{T^ix:i\in  \{0,1,2\dots,n\}\}$. Let $n\geq 2$. Then for  any three pairwise distinct $i,j,k$; we define: 
$$p(O(x,n))=\max \{d(T^ix,T^jx)+d(T^jx,T^kx)+d(T^kx,T^ix): i,j,k\in \{0,1,\dots,n\}\}$$ and
$$p(O(x,\infty))=\sup \{d(T^ix,T^jx)+d(T^jx,T^kx)+d(T^kx,T^ix): i,j,k\in \mathbb{N}\cup \{0\} \}.$$ 

Now using the condition of weak contraction on the perimeters of triangles in $\mathbb{X}$ we derive the following results.
\begin{lemma}
	Let $T$ be a weak contraction on the perimeters of triangles in $\mathbb{X}$ and let  $n$ be any positive integer $\geq 2$. Then for any $x\in \mathbb{X}$ and distinct $i,j,k\in\{0,1,2\dots,n\}$
	$$d(T^ix,T^jx)+d(T^jx,T^kx)+d(T^kx,T^ix)\leq kp(O(x,n)).$$
	\begin{proof}
		The proof follows directly from the definition of $p(O(x,n))$. 
	\end{proof}
\end{lemma}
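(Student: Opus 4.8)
The plan is to reduce the perimeter of the triangle with vertices $T^ix,T^jx,T^kx$ to a single application of the defining weak-contraction inequality, and then to absorb the resulting maximum term into $p(O(x,n))$ purely through the fact that the latter is itself a maximum. Concretely, for distinct indices $i,j,k$ I would pull each index back by one, setting $x'=T^{i-1}x$, $y'=T^{j-1}x$, $z'=T^{k-1}x$, and rewrite
$$d(T^ix,T^jx)+d(T^jx,T^kx)+d(T^kx,T^ix)=d(Tx',Ty')+d(Ty',Tz')+d(Tz',Tx').$$

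First I would invoke the weak contraction hypothesis on the points $x',y',z'$, which bounds the right-hand side by $kM$, where $M$ is the maximum perimeter taken over all triples of distinct points drawn from the six-element set $\{x',y',z',Tx',Ty',Tz'\}=\{T^{i-1}x,T^{j-1}x,T^{k-1}x,T^ix,T^jx,T^kx\}$. The key observation — and this is the sense in which the bound follows from the definition of $p(O(x,n))$ — is that every index occurring in this set, namely $i-1,j-1,k-1,i,j,k$, lies in $\{0,1,\dots,n\}$ whenever $1\le i,j,k\le n$. Hence each triangle spanned by this six-element set is among those over which $p(O(x,n))$ takes its maximum, so $M\le p(O(x,n))$, and combining the two estimates gives $d(T^ix,T^jx)+d(T^jx,T^kx)+d(T^kx,T^ix)\le k\,p(O(x,n))$.

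The main obstacle is not the algebra but the legitimacy of invoking the hypothesis, since the weak contraction is only assumed for three \emph{genuinely distinct} points. The pull-back $x',y',z'$ are indexed by distinct integers, but if the orbit collapses (for instance $T^{i-1}x=T^{j-1}x$ even though $i-1\ne j-1$) the inequality cannot be applied verbatim; in that degenerate case one of the left-hand distances vanishes and the estimate must be recovered by a short separate argument. I would also record the caveat that the argument requires each index to be at least $1$, so that the pull-back $T^{i-1}x$ is defined: indeed, if $0$ were permitted among $i,j,k$, then already for $n=2$ the sole admissible triangle $(x,Tx,T^2x)$ would be forced to satisfy $p(O(x,2))\le k\,p(O(x,2))$, hence $p(O(x,2))=0$, so the displayed index set is to be understood as $\{1,2,\dots,n\}$.
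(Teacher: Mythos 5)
Your argument is correct and, importantly, it is not the paper's argument: the paper disposes of this lemma with the single sentence that it ``follows directly from the definition of $p(O(x,n))$'', which cannot be right as stated, since the definition of $p(O(x,n))$ as a maximum yields only the trivial bound $d(T^ix,T^jx)+d(T^jx,T^kx)+d(T^kx,T^ix)\leq p(O(x,n))$, with constant $1$ rather than $k$. To obtain the factor $k$ one must invoke the weak contraction hypothesis exactly as you do: pull each index back by one, apply the defining inequality to $T^{i-1}x,T^{j-1}x,T^{k-1}x$, and observe that all six points entering the maximum $M$ have orbit indices in $\{0,1,\dots,n\}$, so that $M\leq p(O(x,n))$. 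This pull-back is in fact the argument the paper itself deploys one lemma later (in proving that $p(O(x,n))$ is attained at a triple containing $x=T^0x$), which confirms your reading is the intended one. Your two caveats are also well taken and expose genuine defects in the statement rather than in your proof: (a) the index set must be $\{1,2,\dots,n\}$, since allowing $0$ would, already for $n=2$ where the only admissible triple is $(x,Tx,T^2x)$, force $p(O(x,2))\leq k\,p(O(x,2))$ and hence $p(O(x,2))=0$, and would contradict the very next lemma; and (b) the contraction is only postulated for three genuinely distinct points, so the case where the pulled-back points coincide needs separate treatment --- though I would caution that your proposed ``short separate argument'' may not exist in full generality (for an eventually periodic orbit the degenerate perimeter $2d(T^ix,T^kx)$ need not be bounded by $k\,p(O(x,n))$), so that degenerate case is best flagged as an unstated hypothesis rather than something you are obliged to repair.
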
 
\begin{lemma}
	Let $T$ be a weak contraction on the perimeters of triangles in $\mathbb{X}$ and let $n$ be any positive integer $\geq 2$. Then for any $x\in \mathbb{X}$,
	$$p(O(x,n))=d(x,T^ix)+d(T^ix,T^jx)+d(T^jx,x),$$
	for some distinct $i,j\in \{1,2,3\dots,n\}$.
	\begin{proof}
		If possible, let $p(O(x,n))=d(T^kx,T^ix)+d(T^ix,T^jx)+d(T^jx,T^kx)$ for some pairwise distinct $i,j,k\in\{1,2,3\dots,n\}$. Since $T$ is a weak contraction on the perimeters of triangles in $\mathbb{X}$, so we must have 
		\begin{align*}    
		p(O(x,n))= & d(T^kx,T^ix)+d(T^ix,T^jx)+d(T^jx,T^kx)\\
		\leq & k \max \big{\{}d(x', y') +d(y', z') +d(z', x'): x',y',z'\in \{T^ix, T^jx,\\
		& T^kx,T^{i-1}x, T^{j-1}x,T^{k-1}x\}; x'\neq y'\neq z'\big{\}} \\
		\leq & k p(O(x,n)).
		\end{align*}
		This is a contradiction and hence the result follows. 
	\end{proof}
\end{lemma}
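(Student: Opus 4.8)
The plan is to use that the maximum defining $p(O(x,n))$ is taken over the \emph{finite} index set $\{0,1,\dots,n\}$, hence is attained by at least one triple of pairwise distinct indices $\{i,j,k\}$. The entire content of the lemma is that such a maximizing triple can always be chosen to contain the index $0$, that is, the base point $x=T^0x$. I would therefore argue by contradiction: suppose that \emph{no} maximizing triple contains $0$, fix one maximizing triple $\{i,j,k\}\subseteq\{1,2,\dots,n\}$, and derive a conflict with the weak contraction hypothesis.

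The core step is to pass to preimages. Because $i,j,k\ge 1$, the shifted indices $i-1,j-1,k-1$ lie in $\{0,1,\dots,n-1\}$, and the points $T^{i-1}x,T^{j-1}x,T^{k-1}x$ are carried by $T$ onto exactly $T^{i}x,T^{j}x,T^{k}x$. Feeding these preimages into the defining inequality of a weak contraction yields
\[
p(O(x,n))=d(T^ix,T^jx)+d(T^jx,T^kx)+d(T^kx,T^ix)\le k\,M,
\]
where $M$ is the largest perimeter spanned by points of the six-element set $\{T^{i-1}x,T^{j-1}x,T^{k-1}x,T^{i}x,T^{j}x,T^{k}x\}$. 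The decisive observation is that all six of these points lie in $O(x,n)$, so every perimeter competing in $M$ is the perimeter of a triangle on three distinct orbit points, and therefore $M\le p(O(x,n))$. Hence $p(O(x,n))\le k\,p(O(x,n))$ with $k\in(0,1)$, which is impossible once $p(O(x,n))>0$.

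Two points require attention, and the second is where the real difficulty lies. First, the contradiction only has force when $p(O(x,n))>0$; the case $p(O(x,n))=0$ is trivial, since then every perimeter vanishes and any distinct $i,j\in\{1,\dots,n\}$ realizes the (zero) maximum. Second, the weak contraction inequality is assumed \emph{only} for three distinct points, so the preimage step is licit only when $T^{i-1}x,T^{j-1}x,T^{k-1}x$ are pairwise distinct; this is the main obstacle, and I would handle the degenerate alternative directly instead of through the contraction. If two preimages coincide, say $T^{i-1}x=T^{j-1}x$, then $T^ix=T^jx$ and the maximal perimeter collapses to $2\,d(T^kx,T^ix)$; but the triangle on the vertices $x,T^ix,T^kx$ has perimeter $d(x,T^ix)+d(T^ix,T^kx)+d(T^kx,x)\ge 2\,d(T^ix,T^kx)$ by the triangle inequality, so it too attains the maximum and is already of the required form (with the two nonzero indices $i,k$). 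Combining the nondegenerate case (ruled out by the contraction) with this degenerate case shows that a maximizing triangle through $x$ always exists, giving $p(O(x,n))=d(x,T^ix)+d(T^ix,T^jx)+d(T^jx,x)$ for suitable distinct $i,j\in\{1,\dots,n\}$.
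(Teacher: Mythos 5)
Your proof follows the same route as the paper's: assume every maximizing triple of indices avoids $0$, pass to the preimages $T^{i-1}x,\,T^{j-1}x,\,T^{k-1}x$, apply the weak contraction inequality, and observe that all six competing points lie in $O(x,n)$, forcing $p(O(x,n))\le k\,p(O(x,n))$. The only difference is that you also dispose of the two degenerate cases --- $p(O(x,n))=0$, and coinciding preimages, where the contraction hypothesis (stated only for three \emph{distinct} points) cannot be invoked --- which the paper's proof silently skips, so your argument is the same in substance but more complete.
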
 
\begin{lemma}
	Let $T$ be a weak contraction on the perimeters of triangles in $\mathbb{X}$. Then  $p(O(x,\infty))$ is bounded.
	\begin{proof}  
		Note that for each $n\geq 2$, $p(O(x,n))\leq p(O(x,n+1))$. Therefore, $p(O(x,\infty))=\displaystyle \sup_{n \to \infty} p(O(x,n))$. There exist some distinct $i,j\in \{1,2,3\dots ,n\}$  such that 
		\begin{align*}
		p(O(x,n)) & = d(x,T^ix)+d(T^ix,T^jx)_+d(T^jx,x)\\
		& \leq d(x,Tx)+d(Tx,T^ix)++d(T^ix,T^jx)+d(T^jx,Tx)+d(Tx,x)\\
		& \leq 2 d(x,Tx)+ k p(O(x,n))\\ 
		&\leq  \frac{2}{1-k} d(x,Tx). 
		\end{align*}
		This is true for every $n\geq 2$. Therefore we must have that $p(O(x,\infty))\leq \frac{2}{1-k} d(x,Tx)$. 
	\end{proof}
\end{lemma}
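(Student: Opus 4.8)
The plan is to convert the statement about the supremum into a bound that is uniform in $n$, and then to pass to the limit. First I would note that the index sets are nested, so that $p(O(x,n))\le p(O(x,n+1))$ for every $n\ge 2$; hence $\{p(O(x,n))\}$ is nondecreasing and $p(O(x,\infty))=\sup_{n}p(O(x,n))$. It therefore suffices to exhibit a constant $C$, independent of $n$, with $p(O(x,n))\le C$ for all $n$, and the natural target is $C=\frac{2}{1-k}\,d(x,Tx)$. Once this uniform bound is in hand, taking the supremum over $n$ finishes the proof immediately.

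To produce the uniform bound I would invoke the preceding lemma, which guarantees that
$$p(O(x,n))=d(x,T^ix)+d(T^ix,T^jx)+d(T^jx,x)$$
for some distinct $i,j\in\{1,2,\dots,n\}$; that is, the extremal triangle always has the base point $x$ as one vertex. The idea is then to route the two edges emanating from $x$ through the single point $Tx$: by the triangle inequality $d(x,T^ix)\le d(x,Tx)+d(Tx,T^ix)$ and $d(T^jx,x)\le d(T^jx,Tx)+d(Tx,x)$, so that
$$p(O(x,n))\le 2\,d(x,Tx)+\big(d(Tx,T^ix)+d(T^ix,T^jx)+d(T^jx,Tx)\big).$$
The parenthesised quantity is the perimeter of the triangle with vertices $Tx,T^ix,T^jx$, each of which is an image under $T$ (of $x,T^{i-1}x,T^{j-1}x$ respectively). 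Applying the defining contraction inequality to this triple, and observing that the six points entering the associated maximum all have indices at most $n$ and so lie in $O(x,n)$, bounds this perimeter by $k\,p(O(x,n))$. Substituting and solving the self-referential inequality $p(O(x,n))\le 2\,d(x,Tx)+k\,p(O(x,n))$ yields $p(O(x,n))\le\frac{2}{1-k}\,d(x,Tx)$, uniformly in $n$.

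The step I expect to be the main obstacle is precisely the application of the contraction inequality to the triple $Tx,T^ix,T^jx$. This is legitimate only when these are genuinely images of three \emph{distinct} points, i.e. when $x,T^{i-1}x,T^{j-1}x$ are pairwise distinct; equivalently, when the smaller extremal index satisfies $i\ge 2$. In the borderline situation $i=1$ one vertex of the extremal triangle is already $Tx$, the inserted triangle degenerates, and the clean factor $k\,p(O(x,n))$ is no longer directly available. I would handle this case separately, either by choosing a different auxiliary point through which to route the edge from $x$ so that a genuine (nondegenerate) image triangle is produced, or by estimating the degenerate configuration directly and then recombining with the generic case. All remaining manipulations are routine metric-space estimates.
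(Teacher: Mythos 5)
Your proposal reproduces the paper's argument essentially step for step: monotonicity of $p(O(x,n))$ in $n$, the preceding lemma to place $x$ at a vertex of the extremal triangle, the triangle inequality routed through $Tx$, the contraction inequality applied to the image triangle $Tx,T^ix,T^jx$ to bound its perimeter by $k\,p(O(x,n))$, and the resolution of $p(O(x,n))\le 2d(x,Tx)+k\,p(O(x,n))$. The degenerate case you flag ($i=1$, where the preimage triple $x,T^{i-1}x,T^{j-1}x$ is not pairwise distinct and the contraction hypothesis therefore does not directly apply) is a genuine issue that the paper itself passes over in silence, so your treatment is, if anything, the more careful of the two.
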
 

\section{\bf Main Results}

We start this section by presenting an example which shows that every map contracting perimeters of triangles in $\mathbb{X}$ must be a weak contraction map on the perimeters of triangles but the converse is not true. 
\begin{example}
	Let $\mathbb{X}=\{1,2,3,4\}$ and $d(x,y)=|x-y|$ for all $x,y\in \mathbb{X}$. So $(\mathbb{X},d)$ is a complete metric space. Let $T:\mathbb{X}\to \mathbb{X}$ be defined as $T1=1, Tn=n-1$ otherwise. We show that $T$ is a weak contraction map on the perimeters of triangles in $\mathbb{X}$. Let $x=2,y=3$ and $z=4$. Then 
	$$d(Tx,Ty)+d(Ty,Tz)+d(Tz,Tx)=d(1,2)+d(2,3)+d(3,1)=4$$
	and $$d(x,y)+d(y,z)+d(z,x)=d(2,3)+d(3,4)+d(4,2)=4.$$
	Therefore, $T$ is not a contraction map on the perimeters of triangles in $\mathbb{X}$. Now observe that $\{x,y,z,Tx,Ty,Tz\}= \{1,2,3,4\}$ and $M\big (d(x',y')+d(y',z')+d(z',x')\big )=6$. Considering all other possible values of $x,y,z\in \mathbb{X}$, one can deduce that
	$$d(Tx,Ty)+d(Ty,Tz)+d(Tz,Tx)\leq k M\big (d(x',y')+d(y',z')+d(z',x') \big),$$
	where $x',y',z'\in \{x,y,z,Tx,Ty,Tz\}$ and $k\in (\frac{2}{3},1)$. Thus, $T$ is a weak contraction map on the perimeters of triangles in $\mathbb{X}$.
\end{example}  
\begin{remark}
	Note that the map contracting perimeters of triangles in any metric space is always continuous at every point of the underlying space, see \cite{pet}. However, the weak contraction map is not necessarily continuous at every point.
\end{remark}
In support of our claim, we provide the following example:
\begin{example}
	Let $\mathbb{X}=[0,1]$ and let $d$ be the usual metric on $\mathbb{X}$. Let $T:\mathbb{X} \to \mathbb{X}$ be defined as
	\begin{equation*}
	\begin{split}
	T(\frac{1}{n}) & =\frac{1}{n+1}~ \mbox{for} ~n=1,2,3;\\
	Tx & =0 ~ \mbox{otherwise} .
	\end{split}
	\end{equation*}
	At first we show that $T$ is not a contraction map on perimeters of triangles but a weak contraction map. Set $A=\{1,\frac{1}{2},\frac{1}{3}\}$ and $B=\mathbb{X}\setminus A$. Now whenever we choose $x,y,z\in A $ or $x,y,z\in B$ with $x\neq y\neq z$, then $T$ simply follows the condition of the contraction map on the perimeters of triangles. In other cases, $T$ does not always satisfy the same. For example, we choose $x=1,y=\frac{9}{10}, z=\frac{8}{10}$. Then
	$$d(Tx,Ty) +d(Ty,Tz) +d(Tz,Tx) =1~\mbox{and}~d(x,y) +d(y,z) +d(z,x)=\frac{4}{10}. $$  It follows that there exists no $k\in (0,1)$ such that $d(Tx,Ty) +d(Ty,Tz) +d(Tz,Tx)\leq k(d(x,y) +d(y,z) +d(z,x)) $ holds.  Now if we  choose $x',y',z'\in \{1,\frac{9}{10}, \frac{8}{10},\frac{1}{2},0\}$ with $x'\neq y'\neq z'$ then for any $k\in (\frac{1}{2},1)$ we have 
	$$d(Tx,Ty) +d(Ty,Tz) +d(Tz,Tx)\leq k M\big(d(x',y') +d(y',z') +d(z',x')\big).$$ 
	Observe that one can choose $x,y,z\in \mathbb{X}$ in two possible ways; either $x,y\in A; z\in B$ or $x\in A;y,z\in B$. In every case, $T$ will satisfy the weak contraction condition on the perimeters of triangles in $\mathbb{X}$.  In particular, whenever $x,y\in A; z\in B,$ then
	$$d(Tx,Ty) +d(Ty,Tz) +d(Tz,Tx)\leq k M\big(d(x',y') +d(y',z') +d(z',x')\big)$$ for some 
	$k\in (\frac{2}{3},1)$ and for $x\in A; y,z\in B$
	$$d(Tx,Ty) +d(Ty,Tz) +d(Tz,Tx)\leq k M \big(d(x',y') +d(y',z') +d(z',x')\big)$$ for some 
	$k\in (\frac{3}{4},1).$
	
	Therefore, we can write for any such choice of $x,y,z$ with $x\neq y\neq z$, there exists $k\in (\frac{3}{4},1)$ such that $$d(Tx,Ty) +d(Ty,Tz) +d(Tz,Tx)\leq k M\big(d(x',y') +d(y',z') +d(z',x')\big)$$  holds, where $x',y',z'\in \{x,y,z,Tx,Ty,Tz\}$ and $x'\neq y'\neq z'$.\\
	Next, we show that $T$ is not continuous at every point in $\mathbb{X}$. Let us choose a sequence $(x_n)$ where $x_n=1-\frac{1}{n}$ for all $n\in \mathbf{N}$. Clearly, $x_n\to 1$ but $Tx_n \not\rightarrow T1$ as $n\to \infty$, i.e. $T$ is not continuous at $1$.
\end{example}

Now are in a position to state our main result.
\begin{theorem}
	Let $(\mathbb{X},d)$ be a complete metric space and let $T:\mathbb{X } \to \mathbb{X}$ be a weak contraction map on the perimeters of triangles in $\mathbb{X}$. Suppose  $T^2(x) \neq x$ for all $x\in \mathbb{X}$ with $Tx\neq x$,
	Then for any $x_0\in \mathbb{X}, T^nx_0 \to x^*\in \mathbb{X}$ as $n\to \infty$ and  $x^*$ is a  fixed point of $T$. If $x^*\neq T^nx_0$ for any $n\in \mathbb{N}\cup\{0\}$, then $x^*$ must be unique fixed point of $T$. 
\end{theorem}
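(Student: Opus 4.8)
The plan is to fix $x_0\in\mathbb{X}$, set $x_n=T^nx_0$, and split according to whether the orbit stabilises. If $x_{N+1}=x_N$ for some $N$, then $x_N$ is a fixed point and $x_n=x_N$ for all $n\ge N$, so $T^nx_0\to x^*:=x_N$ trivially. The substantial case is when $x_{n+1}\neq x_n$ for every $n$, and here the standing hypothesis ``$T^2x\neq x$ whenever $Tx\neq x$'' does the crucial work: applied to $x_n$ it yields $x_{n+2}\neq x_n$, so that $x_n,x_{n+1},x_{n+2}$ are pairwise distinct for every $n$ and the perimeter condition can legitimately be invoked along the orbit. This rules out the degenerate $2$-cycles on which the entire triangle machinery would otherwise collapse, and is exactly where the extra assumption enters.

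Next I would show that all the iterates are pairwise distinct. A coincidence $x_i=x_j$ with $i<j$ would force the orbit to be periodic from index $i$; period $1$ contradicts $x_{n+1}\neq x_n$, period $2$ contradicts the hypothesis, and for period $\ge 3$ one takes the three cycle points realising the maximal perimeter $P^*$, writes each as the $T$-image of its (necessarily distinct) cyclic predecessor, and applies the weak contraction to obtain $P^*\le kP^*<P^*$, a contradiction. With all iterates distinct, put $s_n=p(O(T^nx_0,\infty))$, which is finite by the a priori bound $p(O(x,\infty))\le\frac{2}{1-k}d(x,Tx)$ established above. Since any three distinct points of the tail $O(x_{n+1},\infty)$ are the $T$-images of three distinct points of $O(x_n,\infty)$ (distinct images force distinct preimages), the weak contraction applied to the six points involved gives $s_{n+1}\le k\,s_n$, and hence $s_n\le k^ns_0\to 0$.

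From $s_n\to 0$ the sequence $(x_n)$ is Cauchy: for $i,j\ge n$ pick a third tail index $l$, so $2\,d(x_i,x_j)\le d(x_i,x_j)+d(x_j,x_l)+d(x_l,x_i)\le s_n$, and completeness gives $x_n\to x^*$. The delicate point, and the main obstacle, is proving $Tx^*=x^*$, since $T$ need not be continuous and the usual ``pass the limit through $T$'' is unavailable. I would instead use a limiting perimeter estimate: for large $n$ the points $x_n,x_{n+1},x^*$ are distinct, so applying the weak contraction and writing $\delta=d(x^*,Tx^*)$ gives
\[
d(x_{n+1},x_{n+2})+d(x_{n+2},Tx^*)+d(Tx^*,x_{n+1})\le k\,M_n,
\]
where $M_n$ is the maximal perimeter over $\{x_n,x_{n+1},x_{n+2},x^*,Tx^*\}$. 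As $n\to\infty$ the left side tends to $2\delta$, while $\limsup_n M_n\le 2\delta$ because every admissible triangle either has $Tx^*$ as one vertex with the other two collapsing onto $x^*$ (perimeter $\to 2\delta$) or lies entirely in the cluster about $x^*$ (perimeter $\to 0$); thus $2\delta\le 2k\delta$, forcing $\delta=0$.

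Finally, in this case $x^*$ cannot lie on the orbit, for $x^*=x_m$ would give $x_{m+1}=Tx^*=x^*$, contradicting distinctness; this is precisely the hypothesis under which uniqueness is asserted. For uniqueness I would assume a second fixed point $y^*\neq x^*$, apply the weak contraction to the eventually-distinct triples $\{x^*,y^*,x_n\}$, and pass to the limit as above: the left side tends to $2\,d(x^*,y^*)$ and the maximal-perimeter term to at most $2k\,d(x^*,y^*)$, whence $d(x^*,y^*)=0$. The reason the uniqueness clause requires $x^*\neq T^nx_0$ for all $n$ is exactly that this limiting argument needs a sequence of genuine triangles $\{x^*,y^*,x_n\}$ with $x_n\to x^*$ and $x_n\neq x^*$; if the orbit were eventually constant at $x^*$ no such triangles would exist, and both the argument and the conclusion could fail.
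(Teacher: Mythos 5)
Your proof is correct, and while the overall skeleton matches the paper's (Picard iteration, geometric decay of orbital perimeters, Cauchy, limit, then a limiting perimeter inequality to get $Tx^*=x^*$ and uniqueness), the mechanism you use for the decay step is genuinely different and in fact tighter than what the paper writes. The paper bounds only the perimeters of \emph{consecutive} triples $(x_n,x_{n+1},x_{n+2})$ by an induction whose step ``in a similar manner one can deduce $\beta_1\le kM$, $\gamma_1\le kM$'' silently requires controlling non-consecutive triples such as $(x_1,x_2,x_4)$ and $(x_1,x_3,x_4)$; your recursion $s_{n+1}\le k\,s_n$ for the tail quantity $s_n=p(O(T^nx_0,\infty))$ controls \emph{all} triples in the tail simultaneously and is exactly what makes that induction close. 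Two further points where your write-up supplies arguments the paper omits: (1) you explicitly rule out periodic orbits of period $\ge 3$ via the $P^*\le kP^*$ trick (period $2$ being excluded by hypothesis), which is needed before one may legitimately apply the contraction to preimage triples along the orbit --- the paper only asserts that three consecutive iterates are distinct; (2) you separate out the stabilising case $x_{N+1}=x_N$, whereas the paper simply assumes no iterate is fixed. Your Cauchy estimate ($2d(x_i,x_j)\le s_n$) is also more direct than the paper's telescoping geometric series, and your observation that in the non-stabilising case $x^*$ automatically avoids the orbit clarifies why the uniqueness hypothesis is what it is. In short: same architecture, but your key lemma is different and repairs the one genuinely incomplete step in the paper's induction.
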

\begin{proof}
	Let us consider the iteration $x_n=T^nx_0$ for every $n\in \mathbb{N}$. Assume that $x_i$ is not  a fixed point of $T$ for any $i\in \mathbb{N}\cup \{0\}$.  Then it can be shown that any three consecutive points $x_i,x_{i+1},x_{i+2}$ must be distinct for all $i\in \mathbb{N}\cup \{0\}$. Now,
	$$d(Tx_0,Tx_1) +d(Tx_1,Tx_2) +d(Tx_2,Tx_0) \leq k M\big(d(x',y') +d(y',z') +d(z',x')\big)$$
	where $x',y',z'\in \{x_0,x_1,x_2, Tx_0,Tx_1,Tx_2\}=\{x_0,x_1,x_2, x_3\}$ and $x'\neq y'\neq z'$. Let us set $$\alpha=d(x_0,x_1) +d(x_1,x_2) +d(x_2,x_0),$$ 
	$$\beta=d(x_0,x_1) +d(x_1,x_3) +d(x_3,x_0),$$ $$\gamma=d(x_0,x_2) +d(x_2,x_3) +d(x_3,x_0),$$ $$\lambda=d(x_1,x_2) +d(x_2,x_3) +d(x_3,x_1). $$
	Observe that $$d(Tx_0,Tx_1) +d(Tx_1,Tx_2) +d(Tx_2,Tx_0) \leq k \lambda $$ is not possible. In other cases, we get $$         d(Tx_0,Tx_1) +d(Tx_1,Tx_2) +d(Tx_2,Tx_0) \leq k M $$ where $M=p(O(x,\infty))$.
	
	Again,
	$$ d(Tx_1,Tx_2) +d(Tx_2,Tx_3) +d(Tx_3,Tx_1) \leq k \max\{\alpha_1,\beta_1, \gamma_1,\lambda_1\}$$
	where $$\alpha_1=d(x_1,x_2) +d(x_2,x_3) +d(x_3,x_1),$$ 
	$$\beta_1=d(x_1,x_2) +d(x_2,x_4) +d(x_4,x_1),$$ $$\gamma_1=d(x_1,x_3) +d(x_3,x_4) +d(x_4,x_1),$$
	$$\lambda_1=d(x_2,x_3) +d(x_3,x_4) +d(x_4,x_2). $$
	Observe that $$d(Tx_1,Tx_2) +d(Tx_2,Tx_3) +d(Tx_3,Tx_1) \leq k \lambda_1 $$ is not possible. Moreover, we already have $\alpha_1\leq k M$. In a similar manner, one can deduce that $\beta_1\leq k M$ as well as $\gamma_1\leq kM$. Therefore we obtain $$ d(Tx_1,Tx_2) +d(Tx_2,Tx_3) +d(Tx_3,Tx_1) \leq k^2 M.$$ 
	By following the similar arguments, for any $n\in \mathbb{N}$, we obtain  $$ d(Tx_n,Tx_{n+1}) +d(Tx_{n+1},Tx_{n+2}) +d(Tx_{n+2},Tx_n) \leq k^{n+1} M.$$
	Next, we show that $(x_n)$ is a Cauchy sequence. For any $n,p\in \mathbb{N}$, by using triangle inequalities, we have 
	\begin{align*}
	d(x_n,x_{n+p}) &\leq d(x_n,x_{n+1})+d(x_{n+1},x_{n+2})+...+ d(x_{n+p-1},x_{n+p})\\
	&\leq k^n M+k^{n+1} M+...+ +k^{n+p-1} M\\
	&\leq k^n M(1+k +...+k^
	{p-1})
	\end{align*}
	Therefore, $d(x_n, x_{n+p}) \to 0$ as $n\to \infty$, i.e., $(x_n)$ is a Cauchy sequence. Since $(\mathbb{X},d)$ is a complete metric space, let $x_n\to x^*\in \mathbb{X}$ as $n\to \infty$. We claim that $x^*$ must be a fixed point of $T$. Now, we get
	$$	d(Tx^*,Tx_{n-1})+d(Tx_{n-1}, Tx_n)+d(Tx_n, Tx^*)
	\leq kM\big( d(x', y')+d(y', z')+d(z', x')\big),$$
	where $x',y',z'\in \{x^*, x_{n-1},x_n,Tx^*,Tx_{n-1},Tx_n\}=\{x^*, x_{n-1},x_n,Tx^*,x_{n+1}\}$ with $x'\neq y'\neq z'.$
	Note that one can choose $x',y',z'$ in $\binom{5}{3}-1$ possible ways. Taking $n\to \infty$ in the both sides of the above inequality, one can derive that $d(Tx^*,Tx_{n-1})+d(Tx_{n-1}, Tx_n)+d(Tx_n, Tx^*)=2d(x^*,Tx^*)$  and $M\big( d(x', y')+d(y', z')+d(z', x')\big)= 2d(x^*,Tx^*)$. This implies that $d(x^*,Tx^*)\leq k d(x^*,Tx^*)$ for some $k\in (0,1)$. This is a contradiction. 
	Therefore, $x^*$ must be a fixed point of $T$.\\
	Next,  we consider  $x^*\neq T^nx_0$ for any $n\in \mathbb{N}\cup\{0\}$. We show that $x^*$ is a unique fixed point of $T$. If possible, let $x^{**}$ be another fixed point of $T$. Clearly, $x^{**}\neq T^nx_0$ for any $n\in \mathbb{N}\cup\{0\}$. Also, for any  $n\in \mathbb{N}\cup\{0\}$,  $x^*,x^{**},x_n$ are all distinct. Then,
	$$d(Tx^*,Tx_{n})+d(Tx_{n}, Tx^{**})+d(Tx^{**}, Tx^*)
	\leq kM\big( d(x', y')+d(y', z')+d(z', x')\big),$$
	where $x',y',z'\in \{x^*, x_n, x_{n+1},x^{**}\}$ with $x'\neq y'\neq z'.$ Taking $n\to \infty$ in the the both side of the above inequality, we obtain
	$$d(x^*,x^{**})\leq k d(x^*,x^{**})$$ which is a contradiction. So $x^*$ must be a unique fixed point.
\end{proof}
\begin{remark}
	A weak contraction map on the perimeters of triangles in a metric space may have at most two fixed points.
\end{remark}
\begin{example}
	Let $\mathbb{X}=\{0,1,2,3,4\}$ and $d(x,y)=|x-y|$ for all $x,y\in \mathbb{X}$. So $(\mathbb{X},d)$ is a complete metric space. Let $T:\mathbb{X}\to \mathbb{X}$ be defined as $T0=0,T1=1$ and $ Tn=n-1$ otherwise. One can verify that  except $(x,y,z)=(2,3,4)$, for any three pairwise distinct $x,y,z\in \mathbb{X}$,
	$$d(Tx,Ty)+d(Ty,Tz)+d(Tx,Tx)\leq k d(x,y)+d(y,z)+d(z,x),$$ for some $k\in (\frac{3}{4},1)$.
	For $(x,y,z)=(2,3,4)$,
	$$d(Tx,Ty)+d(Ty,Tz)+d(Tx,Tx)\leq k d(Tx,Ty)+d(Ty,z)+d(z,Tx)$$ where $k\in (\frac{2}{3},1)$.
	Therefore, for any three pairwise distinct $x,y,z\in \mathbb{X}$, 
	$$d(Tx,Ty)+d(Ty,Tz)+d(Tx,Tx)\leq k M\big(d(x',y')+d(y',z')+d(z',x')\big)$$
	where $x',y',z'\in \{Tx,Ty,Tz,x,y,z\}$ are pairwise distinct and $k\in (\frac{3}{4},1)$.
	This shows that $T$ is a weak contraction map on the perimeters of triangles in $\mathbb{X}$. Note that $T$ has two fixed points, namely $0$ and $1$.   
\end{example}
We have already explained that every weak contraction on the perimeters of triangles is not necessarily continuous at every point of the underlying space. However, in the following theorem, we prove that weak contraction map $T$ on the perimeters of triangles in  any metric space $\mathbb{X}$ is continuous at a point $x^*$, where $x^*=\displaystyle\lim_{n\to \infty}T^nx$, for some $x\in \mathbb{X}$.

\begin{theorem}
	Let $T$ be a weak contraction on the perimeters of triangles in $\mathbb{X}$ and let $x\in \mathbb{X}$ such that $T^nx\to x^*$ for some $x^* $ in $\mathbb{X}$. Then $T$ is continuous at $x^*$.
\end{theorem}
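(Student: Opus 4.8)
The plan is to reduce continuity at $x^*$ to the single statement that every sequence $(z_m)$ in $\mathbb{X}$ with $z_m\to x^*$ satisfies $Tz_m\to Tx^*$, and to prove this by contradiction through the weak contraction inequality. Before doing so I would first record that the limit $x^*$ is a fixed point of $T$, so that the target of continuity is $x^*$ itself; this is exactly what makes the right-hand maximum collapse in the estimates below.

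For the fixed-point step I would apply the weak contraction to orbit triples $\{x^*, T^nx, T^mx\}$ with $n\neq m$ large (these are pairwise distinct unless the orbit is eventually equal to $x^*$, in which case $Tx^*=x^*$ is immediate). Writing $T(T^nx)=T^{n+1}x$ and letting $n,m\to\infty$, the left side tends to $2\,d(x^*,Tx^*)$, while the governing maximum is taken over a point set all of whose members except possibly $Tx^*$ converge to $x^*$, so it also tends to $2\,d(x^*,Tx^*)$. The inequality then gives $2\,d(x^*,Tx^*)\le 2k\,d(x^*,Tx^*)$ with $k\in(0,1)$, forcing $d(x^*,Tx^*)=0$. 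This is the same mechanism already used in the proof of the main theorem.

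For continuity, suppose $T$ is not continuous at $x^*$. Then there exist $\varepsilon>0$ and a sequence $z_m\to x^*$ with $d(Tz_m,x^*)\ge\varepsilon$ for all $m$; since $z_m\to x^*=Tx^*$ while $Tz_m$ stays $\varepsilon$-away, each $z_m\neq x^*$ and the $z_m$ take infinitely many distinct values. Fixing two large indices I would apply the weak contraction to the pairwise distinct triple $\{x^*, z_m, z_l\}$. Using $Tx^*=x^*$, the left side equals $d(x^*,Tz_m)+d(Tz_m,Tz_l)+d(Tz_l,x^*)$, which is at least $2\varepsilon$. The right side is $k$ times the maximum perimeter over the set $\{x^*, z_m, z_l, Tz_m, Tz_l\}$. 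Setting $\delta=\max\{d(z_m,x^*),d(z_l,x^*)\}$, every pairwise distance in this set is, up to an error of at most $2\delta$, either $0$ or one of $d(x^*,Tz_m)$, $d(x^*,Tz_l)$, $d(Tz_m,Tz_l)$; as these three are precisely the sides of the triangle $\{x^*,Tz_m,Tz_l\}$, the weak contraction gives $(\text{left side})\le k\cdot(\text{left side})+O(\delta)$, hence $(1-k)\cdot 2\varepsilon\le O(\delta)\to 0$ as $m,l\to\infty$, a contradiction.

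The main obstacle is precisely the estimate in the last step: one must show that among all triangles with vertices in the five-point set, the triangle $\{x^*,Tz_m,Tz_l\}$ is asymptotically the one of maximal perimeter, so that the right-hand maximum does not secretly carry extra length that would defeat the contraction. This needs the triangle-inequality bookkeeping that each ``doubled'' quantity such as $2\,d(x^*,Tz_m)$ is dominated by $d(x^*,Tz_m)+d(x^*,Tz_l)+d(Tz_m,Tz_l)$, together with the routine check that $\{x^*,z_m,z_l\}$ are genuinely distinct so the inequality applies. The reduction $Tx^*=x^*$ is what lets this maximum collapse onto the single quantity $d(x^*,Tz_m)+d(Tz_m,Tz_l)+d(Tz_l,x^*)$; without it the right side would still involve $Tx^*$ and the argument would not close.
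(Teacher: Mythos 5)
Your proposal is correct, and it is in fact more complete than the paper's own argument, so the comparison is worth spelling out. The paper opens with ``Suppose $T$ is not continuous at $x^*$, so $\lim_{n}T(x_n)\neq Tx^*$, i.e.\ $d(x^*,Tx^*)>0$'' and then applies the contraction only to the orbit triples $\{x^*,T^{n}x,T^{n+1}x\}$, letting $n\to\infty$ to force $d(x^*,Tx^*)\le k\,d(x^*,Tx^*)$. That first step is a non sequitur for full continuity: failure of continuity at $x^*$ need not be witnessed by the orbit sequence, so what the paper actually proves is that $x^*$ is a fixed point and that $T$ is \emph{orbitally} continuous at $x^*$ (i.e.\ $TT^nx\to Tx^*$). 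Your first stage reproduces exactly that mechanism to get $Tx^*=x^*$, but your second stage supplies the missing content: for an arbitrary sequence $z_m\to x^*$ with $d(Tz_m,x^*)\ge\varepsilon$, you apply the contraction to the genuinely distinct triple $\{x^*,z_m,z_l\}$ and show, via the triangle-inequality bookkeeping $2\,d(x^*,Tz_m)\le d(x^*,Tz_m)+d(x^*,Tz_l)+d(Tz_m,Tz_l)$ and its companions, that the right-hand maximum over the five-point set exceeds the left side by at most $O(\delta)$ with $\delta=\max\{d(z_m,x^*),d(z_l,x^*)\}\to 0$, yielding $(1-k)\cdot 2\varepsilon\le O(\delta)$, a contradiction. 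Your handling of the degenerate cases (eventually constant orbit, $z_m\neq x^*$, infinitely many distinct $z_m$) is also sound. In short: same contraction-collapse mechanism, but your version actually proves the continuity claimed in the statement, whereas the paper's proof as written only establishes orbital continuity.
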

\begin{proof}
	Suppose $T$ is not  continuous at $x^*\in \mathbb{X}.$ So we must have $\displaystyle \lim_{n\to \infty }T(x_n)\neq Tx^*$ i.e., $d(x^*,Tx^*) >0$. Now, 
	$$d(Tx^*,T^nx) +d(T^nx,T^{n+1}x) +d(T^{n+1}x,Tx^*) \leq k M\big(d(x',y')+d(y', z') +d(z', x')\big )$$ where $x',y',z'\in \{ x^*, T^{n-1}x, T^{n}x,T^{n+1}x, Tx^* \}$. Taking $i\to \infty$ in the both sides of the above inequality, one can derive that $M\big(d(x',y')+d(y', z') +d(z', x') \big)= 2d(Tx^*,x^*)$ and hence we obtain 
	$$d(Tx^*,x^*)\leq k d(Tx^*,x^*)$$ which is a contradiction. Hence $T$ must be continuous at $x^*$.
\end{proof}

\section{\bf Completeness}
Now we are interested in characterizing the completeness property of the underlying spaces through the fixed point results of map contracting perimeters of triangles. There is a vast literature analyzing the completeness property of the underlying spaces in terms of fixed point results, see \cite{con,park, sub,suz}. We observe that like Banach Contraction, the map contracting perimeters of triangles can not characterize the completeness property of the underlying spaces.  Suzuki and Takahasi \cite{suz} presented an example of an incomplete metric space in which every continuous map has a fixed point.  Since every map contracting perimeters of triangles in a metric space is continuous, the result follows immediately from \cite{suz}. On the other hand, we can characterize the completeness property of the underlying spaces in terms of fixed point of the weak contraction map on the perimeters of the triangles. In this direction, we present the following theorem.
\begin{theorem}
	Let $(\mathbb{X},d)$ be a metric space. Every mapping $T:\mathbb{X} \to \mathbb{X}$ satisfying the following conditions:
	\begin{enumerate}
		\item[(i)]  $T^2(x) \neq x$ for all $x\in \mathbb{X}$ with $Tx\neq x$;
		\item[(ii)] $d(Tx,Ty) +d(Ty,Tz) +d(Tz,Tx) \leq k M\big(d(x',y')+d(y', z') +d(z', x') \big)$\\ where $k\in (0,1)$ and $M\big(d(x', y') +d(y', z') +d(z', x')\big)$ is defined as\\
		$\max\{d(x', y') +d(y', z') +d(z', x'): x',y',z'\in \{x,y,z,Tx,Ty,Tz\}~
		\mbox{and}~ x'\neq y'\neq z'\};$
	\end{enumerate}
	has a fixed point, then $(\mathbb{X},d)$ must be complete.
\end{theorem}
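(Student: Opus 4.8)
The plan is to argue by contraposition: assuming $(\mathbb{X},d)$ is \emph{not} complete, I will construct a single map $T:\mathbb{X}\to\mathbb{X}$ that satisfies both (i) and (ii) yet has \emph{no} fixed point, contradicting the hypothesis. First I would fix a Cauchy sequence $(b_n)$ in $\mathbb{X}$ with no limit in $\mathbb{X}$ and introduce the auxiliary ``distance-to-the-phantom-limit'' function $f(x)=\lim_{n\to\infty}d(x,b_n)$. This limit exists because $\big(d(x,b_n)\big)_n$ is a Cauchy sequence of reals, and from the triangle inequality one gets the two estimates I will rely on throughout: $|f(x)-f(y)|\le d(x,y)$ (so in particular $f(x)\le d(x,b_i)+f(b_i)$), and, for the sequence terms, $d(b_i,b_j)\le f(b_i)+f(b_j)$. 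Two further facts are immediate: $f(b_n)\to 0$ because $(b_n)$ is Cauchy, while $f(x)>0$ for \emph{every} $x\in\mathbb{X}$, since $f(x)=0$ would force $b_n\to x$, contradicting non-convergence.

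Next I would fix a small constant $\delta\in(0,\tfrac17)$ and define $T$ by sending each $x$ to a sequence term very close to the phantom limit relative to $x$: since $f(b_n)\to 0$ and $f(x)>0$, I can choose an index $n(x)$ with $f(b_{n(x)})<\delta\,f(x)$ and set $Tx:=b_{n(x)}$. The inequality $f(Tx)<\delta f(x)<f(x)$, valid for every $x$, shows at once that $Tx\neq x$, so $T$ has no fixed point; applying it twice gives $f(T^2x)<\delta^2 f(x)<f(x)$, whence $T^2x\neq x$ as well, so condition (i) holds for every $x$.

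The substantive step is verifying the weak contraction (ii) with a \emph{uniform} $k\in(0,1)$. Given three distinct points $x,y,z$, I bound the perimeter of the image triangle from above using $d(b_i,b_j)\le f(b_i)+f(b_j)$:
$$d(Tx,Ty)+d(Ty,Tz)+d(Tz,Tx)\le 2\big(f(Tx)+f(Ty)+f(Tz)\big)<2\delta\big(f(x)+f(y)+f(z)\big)\le 6\delta\max\{f(x),f(y),f(z)\}.$$
For the lower bound on $M$, I take the coordinate (say $x$) realizing $\max\{f(x),f(y),f(z)\}$; the triangle on $x$, $Tx$ and any third point of $\{x,y,z,Tx,Ty,Tz\}$ has perimeter at least $d(x,Tx)\ge f(x)-f(Tx)>(1-\delta)f(x)$, so $M\ge(1-\delta)\max\{f(x),f(y),f(z)\}$. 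A legitimate third vertex always exists: since $y\neq z$, at most one of $y,z$ can equal $Tx$, so one of them lies outside $\{x,Tx\}$. Combining the two estimates yields (ii) with $k=\tfrac{6\delta}{1-\delta}$, which lies in $(0,1)$ precisely because $\delta<\tfrac17$.

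Thus the constructed $T$ satisfies (i) and (ii) but is fixed-point-free, contradicting the standing hypothesis; hence $(\mathbb{X},d)$ must be complete. I expect the main obstacle to be exactly the two-sided estimate above: one must simultaneously push the image perimeter below a small multiple of $\max f$ while keeping $M$ bounded \emph{below} by a fixed multiple of the same quantity, and the whole scheme hinges on choosing $\delta$ small enough that the ratio $6\delta/(1-\delta)$ is a single constant $<1$ working for \emph{all} triples at once.
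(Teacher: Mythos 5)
Your proof is correct, and while it shares the paper's overall strategy -- argue by contraposition and build a fixed-point-free self-map that sends every point deep into a non-convergent Cauchy sequence -- the way you verify the contraction condition is genuinely different. The paper follows Subrahmanyam: for each $x$ it selects an index $N_x$ (or $n'(n)$) beyond which all terms of the sequence lie within $\tfrac{k}{2}d(x,A\setminus\{x\})$ (resp.\ $\tfrac{k}{2}d(x_n,x_{n'})$) of one another, and then estimates the image perimeter term by term against quantities like $d(y,Ty)$; as printed, that chain of inequalities is rather loosely justified. Your route instead packages all the bookkeeping into the $1$-Lipschitz ``phantom distance'' $f(x)=\lim_n d(x,b_n)$, which is strictly positive everywhere by non-convergence and tends to $0$ along the sequence, and reduces condition (ii) to a clean two-sided estimate: the image perimeter is at most $2\big(f(Tx)+f(Ty)+f(Tz)\big)<6\delta\max\{f(x),f(y),f(z)\}$ because image points are sequence terms and $d(b_i,b_j)\le f(b_i)+f(b_j)$, while $M\ge d(x,Tx)\ge (1-\delta)\max\{f(x),f(y),f(z)\}$ via the admissible triangle $\{x,Tx,w\}$. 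This yields the explicit uniform constant $k=6\delta/(1-\delta)<1$ for $\delta<\tfrac17$, and your condition (i) comes for free from $f(T^2x)<\delta^2 f(x)<f(x)$. The one delicate point -- that a legitimate third vertex $w$ for the lower bound always exists -- you handle correctly: since $x,y,z$ are pairwise distinct and $y\neq z$, at least one of $y,z$ differs from both $x$ and $Tx$. In short, your argument is tighter and more self-contained than the paper's own verification, at the cost of introducing the auxiliary function $f$ rather than quoting Subrahmanyam's construction directly.
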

\begin{proof}
	We can prove this theorem by following the idea of Theorem $1$ of \cite{sub}  given by P. V. Subrahmanyam. Let $(\mathbb{X},d)$ be an incomplete metric space. So there exists a non-convergent Cauchy sequence $(x_n)$ in $\mathbb{X}$. Let $A=(x_n)$ where every term is distinct.  Therefore, as given in \cite{sub}, for every $x\in \mathbb{X}\setminus A$ there
	exists $N_x$ such that 
	$$ d(x_m,x_{N_x})<\frac{k}{2} d(x,x_i), ~~i=1,2,3,...;\forall m\geq N_x;$$ 
	and for every $x_n\in A$ there exists $n'(n)>n$ such that 
	$$ d(x_m,x_{n'})<\frac{k}{2} d(x_n,x_{n'})~~\forall m\geq n'.$$
	Now, $T:\mathbb{X}\to \mathbb{X}$ is defined as 
	\begin{align}
	Tx =
	\begin{cases}
	x_{N_x}, & x\notin A\nonumber ;\\
	x_{n'}, &x=x_n\in A.\nonumber
	\end{cases}
	\end{align}

	Clearly, for any $x\in \mathbb{X}, T^2(x) \neq x$, i.e, $T$ satisfies $(i)$.
	Now for any $x,y\in \mathbb{X}$, $d(Tx,Ty)=d(x_n,x_m)<\frac{k}{2} d(y,A\setminus \{y\}), ~\text {if} ~n\geq m$ or $d(Tx,Ty) =d(x_n,x_m)<\frac{k}{2} d(x,A\setminus \{x\}), ~\text{if} ~n< m$. Therefore, for any three pairwise distinct  $x,y,z\in \mathbb{X}$, let $Tx=x_n,Ty=x_m$ and $Tz=x_p$.  Without loss of generality, let $n\geq m\geq p$.  Note that $z\neq Ty$. Then, we have
	\begin{align*}
	d(Tx,Ty) +d(Ty,Tz) +d(Tz,Tx) & < \frac{k}{2} \{d(y,Ty) +d(z,Ty)+d(z,Ty ) \}\\
	& <  \frac{k}{2} \{d(y,Ty) +d(z,Ty)+d(z,Ty ) \}\\
	& +  \frac{k}{2} d(y,Ty)+ k d(y,z)\\
	& <  k\{d(y,Ty) +d(Ty,z)+d(z,y ) \} \\
	& \leq  k M\big(d(x',y')+d(y', z') +d(z', x')\big)  
	\end{align*}
	where $x',y',z'\in \{x,y,z,x_n,x_m,x_p \}$.
	This shows that $T$ satisfies $(ii)$. Note that $T$ does not have any fixed point. This completes our proof.
	
\end{proof}
In view of the above result, we conclude this article with the following theorem.
\begin{theorem}
	A metric space $(\mathbb{X},d)$ is complete if and only if every weak contraction on the perimeters of triangles in $\mathbb{X}$, satisfing $T^2x\neq x$ for any $x\in \mathbb{X}$ with $Tx\neq x$, has fixed point in $\mathbb{X}$. 
\end{theorem}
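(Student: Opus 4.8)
The plan is to prove the two implications separately, and in each case the work has essentially already been done by the two main theorems established earlier in the paper; the theorem is a synthesis of Theorem~2.4 and Theorem~3.1. I would begin by observing that conditions (i) and (ii) of Theorem~3.1 are, verbatim, the hypothesis ``$T$ is a weak contraction on the perimeters of triangles in $\mathbb{X}$ satisfying $T^2x\neq x$ for every $x$ with $Tx\neq x$.'' Making this identification explicit at the outset is the only genuinely necessary bookkeeping, since once it is in place both directions reduce to citing the appropriate prior result.

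For the forward implication (completeness $\Rightarrow$ fixed point property), I would assume $(\mathbb{X},d)$ is complete and let $T$ be an arbitrary weak contraction on the perimeters of triangles with $T^2x\neq x$ whenever $Tx\neq x$. This is precisely the hypothesis of Theorem~2.4. Applying that theorem, for any $x_0\in\mathbb{X}$ the iterates $T^nx_0$ form a Cauchy sequence converging to some $x^*\in\mathbb{X}$, and $x^*$ is a fixed point of $T$. Hence every such $T$ has a fixed point, as required.

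For the reverse implication I would argue by contraposition: assuming $(\mathbb{X},d)$ is not complete, I must exhibit a weak contraction on the perimeters of triangles satisfying the $T^2x\neq x$ condition that fails to have a fixed point. But this is exactly the content of the construction in the proof of Theorem~3.1, which, starting from a non-convergent Cauchy sequence, builds a map $T$ satisfying (i) and (ii) with no fixed point. Invoking Theorem~3.1, incompleteness of $\mathbb{X}$ yields such a fixed-point-free map, contradicting the assumption that every weak contraction of this type has a fixed point; therefore $\mathbb{X}$ must be complete.

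The proof presents no real obstacle, since it is a repackaging of results already in hand. The only point requiring care is the verification that the two descriptions of the map class genuinely coincide, so that Theorem~2.4 and Theorem~3.1 apply without any gap in hypotheses; in particular one should note that Theorem~3.1 is stated for a \emph{metric} space (not assumed complete), which is exactly what the contrapositive of the reverse direction requires. With that matching confirmed, the biconditional follows immediately from the conjunction of the two cited theorems.
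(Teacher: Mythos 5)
Your proposal is correct and is exactly the argument the paper intends: the forward direction is Theorem 2.4 applied in a complete space, and the reverse direction is the contrapositive of Theorem 3.1 (which the paper signals by prefacing the statement with ``In view of the above result'' and omitting a written proof). Your explicit check that the hypotheses of the two theorems coincide with the class of maps in the biconditional is the only substantive step, and you handle it correctly.
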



\end{document}